\def\Prob{\operatorname{Prob}}
\def\ex{\operatorname{ex}}
\theoremstyle{plain}
\newtheorem{theorem}{Theorem}
\newtheorem{definition}{Definition}
\newtheorem{corollary}{Corollary}
\newtheorem{statement}{Proposition}
\theoremstyle{definition}
\newtheorem{remark}{Remark}
\begin{document}

\title{Equipped graded graphs, projective limits of simplices,
and their boundaries}
 \author{A.~M.~Vershik\thanks{St.~Petersburg Department of Steklov Institute of Mathematics, St.~Petersburg, Russia;
Institute for Information Transmission Problems, Moscow, Russia. Email: {\tt vershik@pdmi.ras.ru}. Supported by the RNF grant 14-11-00581.}}

\date{January 17, 2015.}

%\keywords{equipped Bratteli diagrams, projective limits of simplices, ergodic measures, Martin boundary}

  \maketitle

\begin{abstract}
In this paper, we develop a theory of equipped graded graphs (or Bratteli
diagrams) and an alternative theory of projective limits of  finite-dimensional
simplices. An equipment is an additional structure on the
graph, namely, a system of ``cotransition'' probabilities on the set of its
paths. The main problem is to describe all probability measures on the
path space of a graph with given cotransition probabilities; it goes back
to the problem, posed by E.~B.~Dynkin in the 1960s, of describing exit and
entrance boundaries for Markov chains. The most important example is
the problem of describing all central measures, to which  one can reduce the problems of describing states
on AF-algebras or characters on locally finite groups. We suggest an unification of the whole theory, an interpretation of the
notions of Martin, Choquet, and Dynkin boundaries in terms of equipped
graded graphs and in terms of the theory of projective limits of simplices.
In the last section, we study the new notion of ``standardness'' of projective limits of simplices and of equipped Bratteli diagrams, as well as the
notion of ``lacunarization.''
\end{abstract}

\section{Introduction}
In functional analysis, probability theory, geometry one considers different notions of boundary. That of Martin boundary is popular in potential theory, the theory of harmonic functions, the theory of Markov processes. This notion, which appeared in the 1940s, gave rise to an extensive literature. It is the notion of Martin boundary that was one of the motivations for developing the Choquet theory (the other one being the famous Krein--Milman theorem about convex compact subsets in a locally convex space). Here we want to establish a direct connection between the two theories: the theory of boundaries of Markov processes, which is in fact the theory of invariant measures on path spaces of graded graphs (or, which is the same, on the spaces of trajectories of Markov compacta) and the geometry of projective limits of simplices. This simple connection helps in solving the problem of finding boundaries, which, in turn, includes the general theory of invariant measures for hyperfinite actions of groups and hyperfinite equivalence relations. From an algebraic point of view, these  problems concern  lists of traces on algebras or characters on groups. In fact, already in the first papers by the author and S.~V.~Kerov, D.~Voiculescu, and others, a connection was established between the theory of central measures on path spaces of graded graphs and the theory of traces on locally semisimple algebras.  The main nontrivial example was, of course, that of the Young graph, as well as the Kingman graph, etc. Then other examples were considered. The Martin boundary was studied in
\cite{KOO,K} in connection with several concrete graphs. Here we want to describe a general interpretation and a more direct connection of different boundaries with the convex geometry of projective limits of simplices. We relate the exit boundary, which was introduced in the framework of Markov theory in remarkable papers by E.~B.~Dynkin in the 1960s, directly with the Choquet boundary. Other boundaries also find natural interpretations in the framework of the geometry of simplices. But the main results of this  paper, in continuation of the recent paper
 \cite{V}, are presented in \S4; there we introduce new general notions, those of the intrinsic metric (topology), standardness, etc. We use and apply ideas and results of the theory of metric filtrations (\cite{Dec}). The final goal is to find the boundaries in the (numerous) cases where this can be done (standard graphs) and to recognize the cases where the problem is unsolvable (totally nonstandard graphs). One of the obvious applications is to the essentially new problem from the theory of random walks on groups, that of describing all conditional walks with uniform cotransition probabilities. In a joint paper with A.~V.~Malyutin, which is currently in preparation, we will consider one of such examples.

\section{The input data of the problem}

\subsection{A graded graph, the path space, topology}
Consider a locally finite, infinite $\mathbb N$-graded graph~$\Gamma$ (a Bratteli diagram).
The set of vertices of degree $n$, $n=0,1, \dots$, will be denoted by $\Gamma_n$ and called the $n$th {\it level} of~$\Gamma$:
 $$\Gamma=\coprod_{n\in \mathbb N} \Gamma_n;$$
the level $\Gamma_0$ consists of the single initial vertex  $\{\emptyset\}$. We assume that every vertex has at least one successor, and every vertex except the initial one has at least one predecessor. In what follows, we also assume that the edges of $\Gamma$ are simple.\footnote{For our purposes, allowing Bratteli diagrams to have multiple edges does not give anything new, since cotransition probabilities introduced below replace and generalize multiplicities of edges.}
No other assumptions are imposed. A locally semisimple algebra
 $A(\Gamma)$ over $\mathbb C$ is canonically associated to a graded graph $\Gamma$; however, here we do not consider this algebra and do not discuss the relation of the notions  introduced below with this algebra and its representations; this problem is worth a separate study.

A path in $\Gamma$ is a (finite or infinite) sequence of vertices of $\Gamma$ in which every pair of neighboring vertices is connected by an edge (for graphs without multiple edges, this is the same as a sequence of edges). The space of all infinite paths in  $\Gamma$ is denoted by $T(\Gamma)$; it is, in a natural sense, the inverse limit of the spaces of finite paths (leading from the initial vertex to vertices of some fixed level), and thus is a Cantor-like compact set. Cylinder sets in $T(\Gamma)$ are sets defined in terms of conditions on initial segments of paths up to level $n$; they are clopen and determine a base of the topology of $T(\Gamma)$. There is a natural notion of {\it tail equivalence relation} $\tau_{\Gamma}$ on $T(\Gamma)$: two infinite paths are tail-equivalent if they eventually coincide; one also says that such  paths lie in the same block of
the {\it tail partition}.
  The {\it tail filtration} $\Xi(\Gamma)=\{{\mathfrak A}_0 \supset {\mathfrak A}_1 \supset \dots\}$
  is the decreasing sequence of $\sigma$-algebras ${\mathfrak A}_n$, $n \in \mathbb N$,  where ${\mathfrak A}_n$  consists of all Borel sets $A\subset T(\Gamma)$ such that along with every path
$A$ contains all paths coinciding with it up to the $n$th level. In an obvious sense,  ${\mathfrak A}_n$  is complementary to the finite $\sigma$-algebra of cylinder sets of order
$n$. The key idea of \cite{V,V1} is to apply the theory of decreasing filtrations (see, e.g.,
\cite{Dec}) to the analysis of the structure of path spaces and measures on them. Below we touch on this problem.

  \subsection{A system of cotransition matrices, an equipped graph}

 Given a graded graph $\Gamma$, we introduce an additional structure on this graph,  namely, a {\it system of cotransition probabilities}
 $$\Lambda=\{\lambda=\lambda_v^u;\; u\in \Gamma_n, v\in \Gamma_{n+1}, (u,v)\in \mbox{edge}(\Gamma_n,\Gamma_{n+1}),\;n=0,1, \dots\},$$
 by associating with each vertex $v \in \Gamma_n$ a probability vector whose component
  $\lambda_v^u$ is the probability
 of an edge $u\prec v$ entering $v$ from the previous level; here $\sum\limits_ {u: u\prec v}\lambda_v^u=1$ and $\lambda_v^u\geq 0$.

\begin{definition}
An equipped graph is a pair $(\Gamma, \Lambda)$ where $\Gamma$ is a graded graph and
$\Lambda$ is a system of cotransition probabilities on its edges.
 \end{definition}

The term ``cotransition probabilities'' is borrowed from the theory of Markov chains (\cite{D}):
if we regard the vertices of $\Gamma$ as the states of a Markov chain starting from the state $\varnothing$ at time $t=0$,
and the numbers of levels as moments of time, then  $\Lambda=\{\lambda_v^u\}$ is interpreted as the system of cotransition probabilities for this Markov chain:
 $$\mbox{Prob} \{x_{t}=u|x_{t+1}=v\}=\lambda_v^u. $$

It is convenient to regard the  system of cotransition probabilities as a system of $d_n\times d_{n+1}$ Markov matrices:
 $$
 \{\lambda_v^u\}, \quad u \in \Gamma_n, v \in \Gamma_{n+1};\quad |\Gamma_n|=d_n,\; |\Gamma_{n+1}|=d_{n+1}, \;n \in \mathbb N;
 $$
 these matrices generalize the  ($0\vee 1$) incidence matrices of the graph
 $\Gamma$. Our main interest lies in the asymptotic properties of this sequence of matrices. In this sense, the whole theory developed here is a part of the asymptotic theory of infinite products of Markov matrices, which is important in itself.

Every Borel measure $\mu$ on the path space of a graph determines a  system of cotransition probabilities as the system of conditional measures of natural measurable partitions. One says that  $\mu$ agrees with a given system $\Lambda$
of cotransition probabilities if the collection of cotransition probabilities of $\mu$ (for all vertices) coincides with $\Lambda$. Recall that in general  a system of cotransition probabilities does not allow one to uniquely determine the system of transition probabilities $$
 \mbox{Prob} \{x_{t+1}=v|x_t=u\};
 $$
 in other words, it does not uniquely determine the Markov chain.

A measure on the path space of a graph is called {\it ergodic} if the tail $\sigma$-algebra (i.e., the intersection of all  $\sigma$-algebras of the tail filtration) is trivial $\bmod 0$, i.e., consists of two elements.

{\it Our aim is to enumerate all Markov measures, i.e., all possible transition probabilities of ergodic Markov chains with a given system of cotransition probabilities $\Lambda$. It is natural to call the corresponding list the Dynkin boundary of the system of cotransition probabilities $\Lambda$, or the Dynkin boundary of the equipped graph $(\Gamma,\Lambda)$}. This is a topological boundary, and, as we will see, it is the Choquet boundary of a certain simplex (a projective limit of finite-dimensional simplices).

In the probability literature (e.g., in the theory of random walks),  cotransition probabilities are usually defined not explicitly, but as the cotransition probabilities of a given Markov process. We prefer to define them directly, i.e., include them into the input data of the problem. And if such a Markov chain, i.e., a Markov measure
$\mu$ on the path space with these cotransition probabilities is already defined, we can consider the {\it metric Poisson--Furstenberg boundary of this measure}, a space with a measure defined on the tail
 $\sigma$-algebra and induced by $\mu$. This boundary, regarded as a measure space, is a part of the Dynkin boundary. The system of cotransition probabilities determines a cocycle on the tail equivalence relation, i.e., a function $(\gamma_1,\gamma_2) \rightarrow c(\gamma_1,\gamma_2)$ of a pair of equivalent paths, which  is equal to the ratio of the products of cotransition probabilities along these paths (such a ratio is finite,
 since the paths are equivalent). In statistical physics and the theory of configurations, one  also considers more general cocycles called Radon--Nikodym cocycles. In our case, the cocycle has a special form (the product of probabilities over edges) and is called a Markov cocycle.  {\it A measure with given cotransition probabilities is a measure with a given Radon--Nikodym cocycle for a transformation group whose orbit partition coincides with the tail partition.}\footnote{The path space $T(\Gamma)$ of a graded graph $\Gamma$ has another additional structure, namely, a linear order on the edges entering each vertex. It allows one to introduce a lexicographic order on each class of tail-equivalent paths, and then define the so-called adic transformation on $T(\Gamma)$, which sends a path to the next path in the sense of this order. In this paper, we do not use this structure, restricting ourselves to the following remark: a measure is central if and only if it is invariant under the adic transformation, and, as observed above,  a measure  has a given system of cotransition probabilities   if and only if the adic transformation has a given Radon--Nikodym cocycle.}

The most important special case of a system of cotransition probabilities, which is studied in combinatorics, representation theory, and algebraic settings, is as follows:
 $$
 \lambda_v^u=\frac{\dim(u)}{\sum\limits_ {u:u\prec v} \dim(u)},
 $$
where $\dim(u)$ is the number of paths leading from the initial vertex $\emptyset$ to  $u$ (i.e., the dimension of the representation of the algebra $A(\Gamma)$ corresponding to the vertex $u$). In other words, the probability to get from $v$ to $u$ is equal to the fraction of paths that lead from   $\emptyset$ to $u$ among all the paths that lead from
 $\emptyset$ to  $v$. This system of cotransition probabilities is canonical, in that it is determined only by the graph. The corresponding Markov measures on the path space $T(\Gamma)$ are called {\it central measures}; up to now, they have been studied only in the literature on Bratteli diagrams. In terms of the theory of
C$^*$-algebras, central measures are traces on the algebra $A(\Gamma)$, and ergodic central measures are indecomposable traces. For more details on the case of central measures, see \cite{V} and the extensive literature of
the 1980s--2000s. However, further development of the whole theory requires considering an arbitrary system of cotransition probabilities. Note that already for central measures, the asymptotic behavior can be very different; the example of the graph of unordered pairs from   \cite{V} shows how much the answer can differ from the case of familiar graphs, such as the Young graph.

An analog of a system of cotransition probabilities, i.e., the notion of an equipped graph, can also be defined in greater generality: instead of a graded graph, it suffices to have a directed graph or multigraph whose each vertex (except possibly one) has a nonempty set of ingoing edges; one can define an arbitrary system of probabilities on the set of ingoing edges of every vertex; the problem is still to describe the Dynkin boundary, i.e., the collection of all measures on the set of directed paths  with given conditional entrance probabilities.

 \subsection{Operators, the Martin boundary, terminology}

An equipped graph, or a pair $(\Gamma, \Lambda)$, gives rise naturally to two linear operators. The first operator
 $L=L(\Gamma, \Lambda)$ acts on the space $F(\Gamma)=\{f:\Gamma \rightarrow \mathbb R\}$ of all real functions on the set of vertices of  $\Gamma$:
$$(Lf)(v)=\sum\limits_ {u:u\prec v} \lambda_v^u f(u);$$
it is the operator of averaging a function over the cotransition probabilities. The other operator
 $L^*$ acts on the simplex $\Sigma(\Gamma)$ of all probability measures on the set of vertices of $\Gamma$:
    $$L^*(\mu)(u)=\sum\limits_ {v:v\succ u} \lambda_v^u \mu(v).$$
The first operator sends functions on the $n$th level of the graph (i.e., functions vanishing %???
at all levels except $n$)  to functions on the $(n+1)$th level, and the second one sends probability measures on the $(n+1)$th level to probability measures on the $n$th level. It is these restrictions of operators that are determined by the transition and cotransition matrices mentioned above.

It is also clear that every vertex $v$ of the $(n+1)$th level (more exactly, the corresponding $\delta$-measure $\delta_v$)
correctly defines a unique probability measure  $L^*(\delta_v)=\lambda_v^{\cdot}$ on the $n$th level and
(by induction, repeatedly applying the operator $L^*$) measures on all previous levels
$\Gamma_k$, $k<n+1$. We will denote these measures by $\mu_v^k$, $v\in \Gamma_{n+1}$, $k<n+1$.

Following the tradition, by the Martin kernel we mean the function $K(u,v)$ of pairs of vertices $u,v$
from different levels that can be connected by at least one path (otherwise $K(u,v)=0$):
 $$K(u,v)=\sum\limits_ {\{w_i\}}\prod_i p(w_i,w_{i+1});\quad u=w_0\prec w_1 \prec \dots \prec w_k=v, $$
where the sum is over all paths leading from $u\in \Gamma_n$ to $v\in \Gamma_{n+1}.$

As usual, the Martin kernel allows one to define the Martin compactification $\tilde M(\Gamma, \Lambda)$ of the set of functions $\Gamma'=\{v\mapsto K(u,v); u\in \Gamma\}\subset F(\Gamma)$ with respect to the pointwise convergence.
The difference
$\tilde M(\Gamma,\Lambda) \smallsetminus \Gamma'$ is called the {\it Martin boundary} of the pair $(\Gamma, \Lambda)$.
In more detail, this means the following: a sequence of vertices $v_k\in \Gamma$, $k=1,2, \dots$, is a Cauchy sequence if for every vertex $u\in \Gamma$ the numerical sequence of the probabilities $\Prob\{u|v_k\}$ is a Cauchy sequence as $k \to \infty$. More simply stated, the sequence of the measures $\mu_{v_k}^n$ induced by  $v_k$ at an arbitrary level $n$ of $\Gamma$ weakly converges to a measure $\mu^n$ determined by the sequence $\{v_k\}$.
We identify sequences that determine the same measure. Thus a point of the Martin boundary is a probability measure on the space of infinite paths in $\Gamma$ that has given cotransition probabilities and is the weak limit of an infinite sequence of measures $\mu_n$, $n=1,2, \dots$. This is the conventional definition of the Martin boundary of a Markov chain, but stated in terms related to a graded graph. Below we will give an equivalent abstract definition of the Martin boundary in terms of projective limits of simplices and return to the question posed above.

\begin{remark}{\rm In the context of Markov chains and processes,
Dynkin (see one of the first papers \cite{D})  uses the terms ``exit boundary'' and ``entrance boundary.'' What we have called above the Dynkin boundary can be interpreted as the exit boundary for the system of cotransition probabilities under consideration, since it refers to the final behavior of trajectories of Markov chains; and a point of the boundary is a class of trajectories that have the same final behavior and thus determine the same (conditional) ergodic Markov chain.}
\end{remark}

Unfortunately,  confusion in terminology is likely to arise here: the operator conjugate to that of averaging functions over the cotransition probabilities (the ``Laplace operator'') determines a boundary, which, not without reason, is usually called the entrance boundary. The papers \cite{K,KOO} use exactly this terminology. The thing is,  what we consider to be primary, the generator (Laplace operator) and Green's function acting in the space of functions, as in potential theory, or the projection operator on measures, which determines the geometry of the Markov chain in a more general context. On the other hand, of course, time inversion turns the problem of describing all ergodic measures with given cotransition probabilities into the problem of describing all measures with given transition probabilities, which, according to our definition, should be called the entrance boundary. We try to avoid this confusion and  do not use these terms. Our new definitions -- those of standardness, compactness, lacunarity, intrinsic metric and topology -- apply both to systems of transition and cotransition probabilities, and to the theory of ordinary and reverse martingales.

Another remark concerns the notion of the entropy of a (nonstationary) Markov chain. There should exist general theorems that would relate the normalized entropy of a Markov chain on the path space of a graph (cf.\ the definition of the entropy of a Young diagram in \cite{VeK}) with the entropy of random walks (see, e.g., \cite{KV}).

Finally, note another very important fact. The theory of boundaries of Markov processes we develop should be constructed separately for $\sigma$-finite measures. In a sense, this is even more natural, since for interesting Markov chains there are usually no invariant finite measures.\footnote{The necessity of considering $\sigma$-finite measures was indicated in the early paper \cite{KV}.} In the theory of AF-algebras, this problem is very important and related to representations
of type II$_{\infty}$ and to $\sigma$-finite traces. However, the theory of
 $\sigma$-finite Markov chains is not sufficiently developed.

\section{The geometry of projective limits of simplices}

\subsection{Projective limits of simplices and the equivalence of the languages}

An equivalent and rather geometric version of the theory of equipped graphs (a Bratteli diagram $\Gamma$ + a system of cotransition probabilities $\Lambda$) is the theory of projective limits of finite-dimensional simplices.

First we will show how, given a pair $(\Gamma,\Lambda)$, i.e., an equipped graph, one can canonically define a projective limit of finite-dimensional simplices. Then we will see that one can also pass in the opposite direction, from projective limits to equipped graphs.

Denote by $\Sigma_n$ the finite-dimensional simplex of formal convex combinations of vertices  $ v \in\Gamma_n$ of the $n$th level. It is natural to regard this simplex as the set of all probability measures on its vertex set
 $\Gamma_n$. We introduce affine projections
 $\pi_{n,n-1}: \Sigma_n \to \Sigma_{n-1}$; it suffices to define them for each vertex
$v \in \Gamma_n$. Obviously, these projections can be regarded as a system of cotransition probabilities
$\Lambda$, and the images of vertices $v$ are points of the previous simplex, i.e., probability vectors:
$$\pi_{n,n-1}(\delta_v)=\sum\limits_ {u: u\prec v} \lambda_v^u \delta_u; $$ this map is extended by linearity to the whole simplex $\Gamma_n$. The vertex $\emptyset$ corresponds to the zero-dimensional simplex consisting of a single point. Degeneracies are allowed (i.e., vertices may coalesce under the projection). Projections
 $\pi_{n,m}: \Sigma_n \rightarrow \Sigma_m$ of simplices with arbitrary numbers
 $m<n$, $m,n \in \mathbb N$, are defined as follows: $\pi_{n,m}=\prod_{i=m}^{n+1} \pi_{i,i-1}$. Having the data $\{\Sigma_n, \pi_{n,m}\}$, we can, on the one hand, define the projective limit of the simplices and, on the other hand, recover the corresponding graph (and then paths in this graph):  the vertices of $\Gamma_n$ coincide with the vertices of the simplex $\Sigma_n$, and the edges are found from the nonzero components of the vectors $\pi_{n,n-1}$, $n \in N$.

Denote by ${\mathcal M}=\prod_{n=0}^{\infty}\Sigma_n$ the direct product of the simplices $\Sigma_n$, $n\in \mathbb N$,
with the product topology.

\begin{definition}
The projective limit space of a sequence $\{\Sigma_n\}_n$ of simplices with respect to a system of projections
 $\{\pi_{n,m}\}$ is the following subset of the direct product ${\mathcal M}$:
  \begin{align*}
  \lim_{n\to\infty} (\Sigma_n, \pi_{n,m}) &\equiv\big \{\{x_n\}_n;\;\pi_{n,n-1}(x_n)=x_{n-1};\; n=1,2, \dots\big\}
  \\
  &\equiv (\Sigma_{\infty},\Lambda) \subset \prod_{n=0}^{\infty}\Sigma_n =\mathcal M.
  \end{align*}
\end{definition}

\begin{statement}
The projective limit space $\Sigma_{\infty}$ is always a nonempty, convex, closed, and hence compact subset in  $\mathcal M$, which is a (possibly, infinite-dimensional) Choquet simplex.
\end{statement}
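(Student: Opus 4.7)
The plan has two parts: first dispatch the topological/convex properties, then address the Choquet simplex claim, which carries the substantive content.

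\emph{Nonempty, convex, closed, compact.} For each $n\ge 1$, define
\[
F_n = \{x=(x_k)_{k\ge 0}\in\mathcal{M}:\pi_{n,n-1}(x_n)=x_{n-1}\}.
\]
Each $F_n$ is closed in $\mathcal{M}$ by continuity of $\pi_{n,n-1}$ and the coordinate projections, and convex because $\pi_{n,n-1}$ is affine. Hence $\Sigma_\infty=\bigcap_n F_n$ is a closed convex subset of the compact product $\mathcal{M}$ (compactness of $\mathcal{M}$ is Tychonoff's theorem), and so itself compact. Nonemptiness follows from the finite intersection property: for any $N$, pick any $x_N\in\Sigma_N$, set $x_k=\pi_{k+1,k}(x_{k+1})$ for $k<N$, and fill in indices $k>N$ arbitrarily. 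This gives a point of $F_1\cap\dots\cap F_N$, so by compactness $\bigcap_n F_n$ is nonempty.

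\emph{Choquet simplex property.} Here I would invoke the classical theorem that an inverse limit of Choquet simplices along continuous affine maps is again a Choquet simplex (Davies--Vincent-Smith; see also Alfsen's monograph on compact convex sets). The cleanest verification is through the Riesz interpolation characterization: a compact convex set $K$ is a Choquet simplex iff the ordered Banach space $A(K)$ of continuous affine functions, with pointwise ordering, satisfies Riesz interpolation. Each finite-dimensional $\Sigma_n$ is a simplex, so $A(\Sigma_n)$ is a finite lattice. The canonical coordinate projections $\pi_n:\Sigma_\infty\to\Sigma_n$ give order-preserving pullbacks $\pi_n^*:A(\Sigma_n)\to A(\Sigma_\infty)$ whose ranges form an increasing family. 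Because the $\pi_n$ jointly separate points of $\Sigma_\infty$, every continuous affine function on $\Sigma_\infty$ is uniformly approximable by elements of $\bigcup_n \pi_n^* A(\Sigma_n)$; and Riesz interpolation is stable under uniform closure, so it transfers to $A(\Sigma_\infty)$, establishing the simplex property.

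\emph{Main obstacle.} The topological properties are routine. The genuine content is the simplex claim, and within it the delicate step is the density/approximation statement for $A(\Sigma_\infty)$ together with the preservation of Riesz interpolation under uniform limits. If one prefers to avoid that route, a clean alternative is to argue directly using maximal measures: for any $x\in\Sigma_\infty$, a maximal representing probability measure $\mu$ on the extreme boundary of $\Sigma_\infty$ must push forward to the unique barycentric representation of $x_n$ in the finite simplex $\Sigma_n$, so $\mu$ is determined by its projections, and the compatibility of those projections under the $\pi_{n,n-1}$ reconstructs $\mu$; uniqueness at each finite level then forces uniqueness at the limit, which is the defining property of a Choquet simplex.
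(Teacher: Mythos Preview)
Your proposal is correct. The topological part matches the paper's treatment (the paper simply declares nonemptiness and closedness ``obvious''; you spell out the finite intersection property argument). For the Choquet simplex claim, however, your primary route is different from the paper's: you invoke the Davies--Vincent-Smith theorem via Riesz interpolation in $A(\Sigma_\infty)$, whereas the paper argues directly by constructing, for each point $\{x_n\}$ of the limit, a coherent family of measures $\nu_{x_m}$ on the finite simplices (obtained as weak limits of the pushforwards $\pi_{n,m}$ applied to the vertex decomposition of $x_n$) and asserting that these assemble into the unique Choquet decomposition. Your ``clean alternative'' at the end --- pushing a maximal representing measure down to each finite level and using uniqueness there --- is exactly the paper's approach. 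The abstract route buys you portability and a one-line citation; the paper's concrete route yields, as a byproduct, an explicit extremality criterion for points of $\Sigma_\infty$ (stated in the paper as the next proposition).

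One caveat on the Riesz argument as you wrote it: the bonding maps $\pi_{n,n-1}$ are not assumed surjective here (the paper explicitly introduces the proper images $\Omega_m=\bigcap_{n>m}\pi_{n,m}\Sigma_n$), so from $\pi_N^*a_i\le \pi_N^*b_j$ on $\Sigma_\infty$ you only get $a_i\le b_j$ on $\pi_N(\Sigma_\infty)\subset\Sigma_N$, and the lattice join $a_1\vee a_2$ in $A(\Sigma_N)$ is not immediately bounded above by $b_j$ after pullback. The general inverse-limit theorem you cite does handle the non-surjective case, but a bare-hands Riesz verification would need an extra step here; your alternative route sidesteps the issue entirely.
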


The affine structure of the direct product $\mathcal M$ determines the affine structure of the limiting space; the fact that it is nonempty and closed is obvious. It remains only to check that every point of the limiting space has a unique decomposition over its Choquet boundary. This is done in the next subsection.

We differentiate between the projective limit space and the ``projective limit structure,'' meaning that it is important to consider not only the limiting space itself, i.e., an infinite-dimensional simplex,  but also the structure of prelimit simplices and their projections.

We will show that, given a projective limit of simplices, one can recover the corresponding graph, the path space, and the system of cotransition probabilities; and the projective limit constructed from this system according to the above rule coincides with the original one. This will establish a tautological relation between two languages: that of pairs \{a Bratteli diagram, a system of cotransition probabilities\} on the one hand, and that of projective limits of finite-dimensional simplices on the other hand.

Indeed, let $\{\Sigma_n\}$, $n\in \mathbb N$, be a projective limit of finite-dimensional simplices and $\{\pi_{n,m}\}$ be a coherent system of projections:
$$
 \pi_{n,m}:\Sigma_n \rightarrow \Sigma_m,\quad n\geq m,\ n,m\in \mathbb N.
$$
Take the vertices of
$\Sigma_n$ as the vertices of the $n$th level of $\Gamma$; a vertex $u$ of the $n$th level precedes a vertex $v$ of the $(n+1)$th level if the projection $\pi_{n+1,n}$ sends $v$ to a point of $\Sigma_n$
whose barycentric coordinate with respect to $u$ is positive. As a system of transition probabilities we take the system of vectors $\{\lambda_v^u\}$ related to the projections $\pi_{n+1,n}$ as described above.

In what follows, given a projective limit of simplices, we will use the  graph (of vertices of all simplices) canonically associated with this limit, its path space, etc., and,  similarly, speak about the projections of simplices canonically associated with an equipped graph.

\subsection{Properties of the limiting space, extremality and almost extremality}

Consider an arbitrary projective limit of finite-dimensional simplices
$$ \Sigma_1 \leftarrow \Sigma_2 \leftarrow \dots\leftarrow \Sigma_n\leftarrow \Sigma_{n+1}\leftarrow
\dots \leftarrow\Sigma_{\infty}\equiv \Sigma(\Gamma,\Lambda).$$
First of all, we define the limiting projections $\pi_{\infty,m}:\Sigma_{\infty}\rightarrow \Sigma_m$  for every $m$
as the limits $\lim_n \pi_{n,m}$: obviously, the images $\pi_{n,m}\Sigma_n$, regarded as subsets in $\Sigma_m$, decrease monotonically as $n$ grows, and their intersection is a set denoted by
$\Omega_m= \bigcap_{n:n>m} \pi_{n,m}\Sigma_n$; the sets $\Omega_m$ are convex closed subsets of the finite-dimensional simplices
$\Sigma_m$, $m=1,2, \dots$, and the limiting projections are epimorphic maps of the limiting space
$\Sigma_{\infty}$ onto these sets:
$$\pi_{\infty,m}:\Sigma_{\infty}\rightarrow \Omega_m. $$

It would be more economical to consider the projective limit $$\Omega_1\leftarrow \Omega_2 \leftarrow \dots \leftarrow\Omega_n \leftarrow \dots \leftarrow\Omega_{\infty}=\Sigma(\Gamma,\Lambda)$$
with the epimorphic projections $\pi_{n,m}$ restricted to $\Omega_n$ and, by definition, with the same limiting space. However, finding the sets $\Omega_n$ explicitly is an interesting problem equivalent to the main problem of finding all invariant measures.\footnote{In particular, an explicit form of the compact sets
$\Omega_n$ is known in very few cases, even among those ones where one knows the central measures. Even for the Pascal graph, they are interesting and rather complicated convex compact sets; and, for instance, in the case of the Young graph, the author does not know a description of these sets as clear as in the case of the Pascal graph.}

Every point of the limiting space, i.e., a sequence  $\{x_m\}$ with $x_m \in \Sigma_m$, $\pi_{m,m-1}x_m=x_{m-1}$,
defines, for every $m$, a sequence of measures $\{\nu_n^m\}_n$ on the simplex $\Sigma_m$, namely,
$\nu_n^m=\pi_{n,m}(\mu_n)$, where the measure $\mu_n$ is the (unique) decomposition of $x_n$ in terms of the extreme points of the simplex $\Sigma_n$. Of course, the barycenter of each of the measures
$\nu_n^m$ in $\Sigma_m$ is $x_m$, and this sequence of measures  is coarsening in a natural sense and weakly converges in $\Sigma_m$ as $n \to \infty$ to a measure $\nu_{x_m}$ concentrated on the subset
 $\Omega_m \subset \Sigma_m$. Obviously, in this way one can obtain all points of the limiting space, i.e., all measures with given cotransition probabilities.

 A point of an arbitrary convex compact space $K$ is called {\it extreme} if there is no nontrivial convex combination of points of $K$ representing this point; the set of extreme points is called the Choquet boundary of $K$ and denoted by
 $\ex K$. A point is called {\it almost extreme} if it lies in the closure
 $\overline{\ex}(K)$ of the Choquet boundary. Recall that an affine compact space in which every point has a unique decomposition into a convex combination of extreme points is called a Choquet simplex.

Now we give a general criterion of extremality and almost extremality for points of a projective limit of simplices.

\begin{statement}
{\rm1.} A point $\{x_n\}$ of a projective limit of simplices is extreme if and only if for every $m$ the weak limit of the measures $\nu_{x_m}$ on the simplex $\Sigma_m$ is the $\delta$-measure at $x_m$:
  $\lim_n \nu_n^m \equiv \nu_{x_m}= \delta_{x_m}$.

{\rm2.} A point $\{x_n\}$ is almost extreme if for every $m$ and every neighborhood
 $V(x_m)$ of $x_m \in \Sigma_m$ there exists an extreme point $\{y_n\}$ of the limiting space such that
 $y_m \in  V(x_m)$.

{\rm3.} Every point $\{x_n\}$ of a projective limit of simplices has a unique decomposition in terms of extreme points (Choquet decomposition), which is  defined via the measures $\nu_{x_m}$.
\end{statement}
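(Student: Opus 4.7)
The plan is to exploit throughout that each prelimit $\Sigma_n$ is a finite-dimensional simplex, so every $x_n \in \Sigma_n$ has a unique barycentric decomposition $\mu_n$ on the vertex set $\Gamma_n$. The pushforwards $\nu_n^m = (\pi_{n,m})_*\mu_n$ satisfy $(\pi_{m,k})_*\nu_n^m = \nu_n^k$ for $k \le m \le n$, a compatibility that passes to the weak limit to give $(\pi_{m,k})_*\nu_{x_m} = \nu_{x_k}$.

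For Part 1, I would first prove the easy implication: assume every $\nu_{x_m} = \delta_{x_m}$ and suppose $\{x_n\} = \tfrac12(\{y_n\}+\{z_n\})$ in $\Sigma_\infty$. Uniqueness of barycentric decomposition in the finite-dimensional $\Sigma_n$ forces $\mu_n^{x} = \tfrac12(\mu_n^{y} + \mu_n^{z})$; pushing forward to $\Sigma_m$ and taking the weak limit yields $\delta_{x_m} = \tfrac12(\nu_{y_m}+\nu_{z_m})$, whose only solution (since Dirac masses are extreme in the space of probability measures) is $\nu_{y_m} = \nu_{z_m} = \delta_{x_m}$, whence $y_m = z_m = x_m$ for every $m$. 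Conversely, if some $\nu_{x_m}$ is non-trivial, then consistency of the family $\{\nu_{x_k}\}$ on the inverse system $\{\Omega_k, \pi_{k,j}\}$ of compact metric spaces yields a Borel probability $\nu_{x_\infty}$ on $\Sigma_\infty = \varprojlim \Omega_k$ projecting to $\nu_{x_k}$ at every level (the standard inverse-limit construction for probability measures on compact Hausdorff spaces). Its barycenter commutes with the affine projections $\pi_{\infty,m}$ and therefore equals $\{x_n\}$; non-triviality of $\nu_{x_m}$ forces non-triviality of $\nu_{x_\infty}$, contradicting extremality.

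Part 2 is a formal consequence of the product topology on $\Sigma_\infty \subset \prod_n \Sigma_n$: a base of open neighborhoods of $\{x_n\}$ consists of cylinder sets $\{\{y_n\} : y_m \in V(x_m)\}$ for some level $m$ and open $V(x_m) \subset \Sigma_m$, so the stated condition ``every such cylinder meets the extreme set'' is exactly $\{x_n\} \in \overline{\ex\,\Sigma_\infty}$.

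For Part 3 the natural Choquet candidate is the measure $\nu_{x_\infty}$ constructed above. To show it is carried by extreme points, I would apply reverse-martingale convergence for the increasing filtration $\mathcal F_n := \pi_{\infty,n}^{-1}(\mathrm{Borel}(\Omega_n))$, which separates points of $\Sigma_\infty$ because two distinct inverse sequences differ at some finite level; the conditional measures on its fibers shrink $\nu_{x_\infty}$-almost surely to Dirac masses, and by the criterion of Part 1 this is precisely extremality of the base point. Uniqueness of the representing measure reduces to the previously stated fact that $\Sigma_\infty$ is a Choquet simplex, provable by checking that the cone of positive continuous affine functionals is a lattice---a property that descends from each finite-dimensional $\Sigma_n$ through the inverse-limit structure. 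The main obstacle I anticipate is exactly the support-on-extreme-points step: one must verify carefully both that $\mathcal F_n$ generates the full Borel $\sigma$-algebra of $\Sigma_\infty$ and that the reverse-martingale limit of the measure-valued disintegration actually concentrates at a Dirac mass almost surely; once this is in place, the remaining assertions are bookkeeping with affine pushforwards and weak limits on compact spaces.
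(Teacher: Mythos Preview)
The paper states this proposition without proof; there is no argument in the text to compare against. Your outline therefore supplies what the paper omits, and Parts~1 and~2 are essentially correct. In Part~1 the two implications are handled cleanly: the ``$\delta$-measure $\Rightarrow$ extreme'' direction via extremality of Dirac measures in $P(\Sigma_m)$, and the converse via the Kolmogorov-type inverse-limit measure $\nu_{x_\infty}$ with barycenter $\{x_n\}$. Part~2 is indeed just the observation that, because the projections $\pi_{m,k}$ are continuous, a single-coordinate cylinder at a sufficiently high level refines any finite-coordinate basic neighborhood in the product topology.

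There is, however, a genuine gap in your Part~3 mechanism. The increasing filtration $\mathcal F_n=\pi_{\infty,n}^{-1}(\mathrm{Borel}(\Omega_n))$ on $\Sigma_\infty$ does separate points, so the $\nu_{x_\infty}$-conditional measures given $\mathcal F_n$ shrink to Dirac masses at $y$ for $\nu_{x_\infty}$-a.e.\ $y$. But this statement is about the conditional law of $\nu_{x_\infty}$ on fibers; it is \emph{not} the quantity $\nu_{y_m}=\lim_n(\pi_{n,m})_*\mu_n^y$ appearing in the criterion of Part~1, which is built from the barycentric decompositions $\mu_n^y$ of the coordinates $y_n$ of $y$ itself. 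Shrinking of the former does not by itself imply the latter are Dirac, so the sentence ``by the criterion of Part~1 this is precisely extremality of the base point'' is where the argument breaks.

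The natural repair is to move from $\Sigma_\infty$ to the path space $T(\Gamma)$. The point $x$ corresponds to a probability $\mu_x$ on $T(\Gamma)$ with the given cotransition system; disintegrate $\mu_x$ over the \emph{tail} $\sigma$-algebra $\mathfrak A_\infty=\bigcap_n\mathfrak A_n$. Almost every conditional measure is tail-trivial, hence (by the identification the paper makes in \S3.4) corresponds to an extreme point of $\Sigma_\infty$. One then checks, via the reverse martingale $\gamma\mapsto\pi_{n,m}(\gamma_n)=E[\delta_{\gamma_m}\mid\mathfrak A_n]$, that the law on $\Sigma_\infty$ of this tail-measurable map is exactly your $\nu_{x_\infty}$. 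Uniqueness then follows either from the lattice argument you sketch or, more directly, from uniqueness of the ergodic decomposition on $T(\Gamma)$.
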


\begin{corollary}
A projective limit of finite-dimensional simplices is a (in general, infinite-dimensional) Choquet simplex.
\end{corollary}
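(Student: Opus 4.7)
The corollary is essentially a repackaging of part 3 of the preceding proposition, since the definition of a Choquet simplex is a compact convex subset $K$ of a locally convex space in which every point admits a unique representing probability measure supported on the Choquet boundary $\ex K$. By Proposition 1, $\Sigma_{\infty}$ is a nonempty, convex, and compact subset of the locally convex product space $\mathcal{M} = \prod_{n}\Sigma_n$; what remains is to upgrade the uniqueness of the compatible family $\{\nu_{x_m}\}_m$ asserted in part 3 to the uniqueness of an honest representing measure on $\Sigma_{\infty}$ itself, and to identify its support with the Choquet boundary.

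\textbf{Plan.} Fix $x=\{x_n\}\in\Sigma_{\infty}$. I would first observe that the measures $\nu_{x_m}$ produced in part 3 form a compatible projective family: by construction $\nu_{x_m}=\lim_{n}\pi_{n,m}(\mu_n)$, and continuity of $\pi_{m,m-1}$ together with the tower of projections yields $(\pi_{m,m-1})_{\ast}\nu_{x_m}=\nu_{x_{m-1}}$. Since every $\Sigma_m$ is a compact metric space, the Bochner--Kolmogorov extension theorem for projective limits of Borel probability measures produces a unique Borel probability $\nu_x$ on $\mathcal{M}$ with these marginals; because each $\nu_{x_m}$ is carried by $\Omega_m$, the resulting $\nu_x$ is concentrated on $\Sigma_{\infty}$. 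The barycenter identity $\int y\,d\nu_x(y)=x$ then follows coordinate by coordinate from the analogous identity on each finite-dimensional $\Sigma_m$ together with weak convergence.

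\textbf{Support on extreme points, uniqueness, and the main obstacle.} Using the criterion in part 1, the set of extreme points can be written as
\[
\ex\Sigma_{\infty}=\{\{y_m\}\in\Sigma_{\infty}:\nu_{y_m}=\delta_{y_m}\text{ for every }m\},
\]
which is a countable intersection of Borel conditions, hence Borel. A reverse-martingale argument along the decreasing filtration generated by $\pi_{\infty,m}$ shows that for $\nu_x$-almost every $y$ the conditional distributions of $\nu_x$ given $\pi_{\infty,m}$ collapse to a Dirac mass as $m\to\infty$, so $\nu_{y_m}=\delta_{y_m}$ for all $m$ and $y\in\ex\Sigma_{\infty}$. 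For uniqueness, if $\nu'$ is any probability on $\Sigma_{\infty}$ supported on $\ex\Sigma_{\infty}$ with barycenter $x$, then $(\pi_{\infty,m})_{\ast}\nu'$ is a probability on $\Sigma_m$ with barycenter $x_m$, and the limit/coarsening characterization in part 3 forces it to coincide with $\nu_{x_m}$; since the system of marginals determines the projective-limit measure, $\nu'=\nu_x$. The main obstacle I foresee is exactly this support step: $\ex\Sigma_{\infty}$ need not be closed in the infinite-dimensional setting, so the argument cannot rely on weak convergence alone and must lean on the part 1 characterization together with the martingale/disintegration picture associated to the projections $\pi_{\infty,m}$.
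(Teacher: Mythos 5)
Your overall architecture is exactly the paper's: Proposition~1 supplies nonemptiness, convexity and compactness, and the corollary is then read off from part~3 of the extremality proposition (unique decomposition via the measures $\nu_{x_m}$); the paper in fact gives no further argument, so you are supplying details it omits. Most of what you add is sound: the compatibility $(\pi_{m,m-1})_{\ast}\nu_{x_m}=\nu_{x_{m-1}}$, the Kolmogorov extension to a measure $\nu_x$ on $\Sigma_{\infty}$, the barycenter identity, the Borelness of $\ex\Sigma_{\infty}$, and the uniqueness step (which is where the real content of ``simplex'' lies, and which goes through once you integrate the pointwise convergence $\nu_n^m(y)\to\delta_{y_m}$ over $\nu'$ by dominated convergence).

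There is, however, a genuine gap in the support step. The $\sigma$-algebras $\sigma(\pi_{\infty,m})$ on $\Sigma_{\infty}$ form an \emph{increasing} filtration in $m$ (knowing $y_m$ determines $y_{m-1}$), and the statement that the conditional distributions of $\nu_x$ with respect to them collapse to Dirac masses as $m\to\infty$ is automatic --- the coordinates separate points of $\Sigma_{\infty}$ --- and says nothing about whether a $\nu_x$-typical $y$ satisfies $\nu_{y_m}=\delta_{y_m}$. The reverse-martingale argument you want lives on the path space, not on $\Sigma_{\infty}$: realize $x$ as the measure $\mu_x$ on $T(\Gamma)$ with cotransition system $\Lambda$, condition on the genuinely decreasing tail filtration $\{\mathfrak A_n\}$, and use reverse-martingale convergence to show that the conditional measures of $\mu_x$ given $\bigcap_n\mathfrak A_n$ exist, still have cotransition probabilities $\Lambda$, and are almost everywhere tail-trivial, i.e., correspond to extreme points of $\Sigma_{\infty}$; the law of this random conditional measure is your $\nu_x$. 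Alternatively, since $\Sigma_{\infty}$ is a metrizable compact convex set, Choquet's existence theorem already gives \emph{some} representing measure concentrated on $\ex\Sigma_{\infty}$, and you can reserve the projective-limit machinery entirely for the uniqueness half; either repair closes the gap.
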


One can easily prove that the converse is also true: every separable Choquet simplex can be represented as a projective limit of finite-dimensional simplices, but, of course, such a representation is far from being unique. However, it is worth noting that the simplex of invariant measures for an action of a nonamenable group on a compact space is separable, though its possible approximation is not generated by finite approximations of the action; thus there arises a nontrajectory finite-dimensional approximation of the action, which, apparently, has not been studied.

\begin{remark}{\rm
Most probably, the first two claims of the proposition can be extended to projective limits of arbitrary convex compact spaces.}
\end{remark}

Recall that among separable Choquet simplices one singles out so-called {\it Poulsen simplices} for which the set of extreme points is dense; such a simplex is unique up to an affine isomorphism and universal in the class of all separable simplices. One can easily give an example of a Poulsen simplex in our terms.

\begin{statement}
Consider a projective limit of simplices $\Sigma_n$ satisfying the following property: for every $m$ the union
$$
\bigcup_{n;t}\big\{\pi_{n,m}(t);\; t\in \ex(\Sigma_n), \;n=m,m+1,\dots\big\}
$$
over all vertices of  $\Sigma_n$ and all $n>m$ is dense in
 $\Sigma_m$. Then the limiting space is a Poulsen simplex.
\end{statement}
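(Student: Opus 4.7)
The plan is to invoke Part 2 of the preceding Proposition: since a basic open neighborhood of $\{x_n\}\in\Sigma_\infty$ in the product topology has the form $\pi_{\infty,m}^{-1}(V(x_m))$, showing that $\Sigma_\infty$ is Poulsen reduces to the task of producing, for any given $\{x_n\}\in\Sigma_\infty$, any $m$, and any neighborhood $V(x_m)$ of $x_m$ in $\Sigma_m$, an extreme point $\{y_n\}\in\ex(\Sigma_\infty)$ with $y_m\in V(x_m)$.

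I will build $\{y_n\}$ as a limit of a carefully chosen sequence of vertices. Fix a summable sequence $\epsilon_k>0$ of total sum so small that the total-variation ball of this radius around $x_m$ lies in $V(x_m)$. Applying the hypothesis at level $m$, pick $N_1>m$ and $v_1\in\Gamma_{N_1}$ with $d_{\mathrm{TV}}(\mu_{v_1}^m,x_m)<\epsilon_1$; then inductively, given $v_k\in\Gamma_{N_k}$, apply the hypothesis at level $N_k$ to pick $N_{k+1}>N_k$ and $v_{k+1}\in\Gamma_{N_{k+1}}$ with $d_{\mathrm{TV}}(\mu_{v_{k+1}}^{N_k},\delta_{v_k})<\epsilon_{k+1}$, where $\mu_v^j:=\pi_{n,j}(\delta_v)$ for $v\in\Gamma_n$. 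Since each projection $\pi_{n,j}$ is a Markov kernel and hence $1$-Lipschitz in total variation, the sequence $\{\mu_{v_k}^j\}_k$ is Cauchy in $\Sigma_j$ for every fixed $j$; set $y_j:=\lim_k\mu_{v_k}^j$. Continuity of the projections then gives $\{y_j\}\in\Sigma_\infty$, the telescoping bound yields $y_m\in V(x_m)$, and crucially $y_{N_k}(v_k)\to 1$ as $k\to\infty$, i.e., $y_{N_k}$ concentrates on the single vertex $v_k$.

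The main obstacle is showing that the $y$ so constructed is in fact extreme. For this I invoke Part 3 of the preceding Proposition to write a Choquet decomposition $y=\int_{\ex(\Sigma_\infty)} z\,dP(z)$. Pushing forward to $\Sigma_{N_k}$ and evaluating at $v_k$ yields $\int z_{N_k}(v_k)\,dP(z)=y_{N_k}(v_k)\to 1$, and since $z_{N_k}(v_k)\in[0,1]$, a standard $L^1$-argument furnishes a subsequence $k_l$ along which $z_{N_{k_l}}(v_{k_l})\to 1$ for $P$-almost every $z$. For such $z$ we have $d_{\mathrm{TV}}(z_{N_{k_l}},\delta_{v_{k_l}})\to 0$; contractivity of the projections then gives $d_{\mathrm{TV}}(z_j,\mu_{v_{k_l}}^j)\to 0$ for every $j$, and since $\mu_{v_{k_l}}^j\to y_j$ this forces $z_j=y_j$ for all $j$, i.e., $z=y$ $P$-a.s. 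Hence $P=\delta_y$, so $y$ is extreme. The delicate point is exactly this concentration-to-extremity implication, which relies essentially on the uniqueness of the Choquet decomposition in a projective limit of simplices; the rest of the argument is routine application of the hypothesis and compactness.
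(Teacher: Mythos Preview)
Your proof is correct and complete. The paper itself offers essentially no argument beyond the single line ``the criterion obviously implies that the set of its extreme points is dense,'' so you have supplied a genuine proof where the paper supplies none; your inductive construction of the approximating sequence of vertices and the verification of extremality via the uniqueness of the Choquet decomposition (Part~3 of the preceding Proposition) are both sound. A minor alternative: extremality of $\{y_n\}$ can also be read off directly from Part~1, since $y_{N_k}\to\delta_{v_k}$ forces the pushforward measures $\nu_{N_k}^m$ to concentrate at $\mu_{v_k}^m\to y_m$, and the full sequence $\nu_n^m$ is already known to converge; but your route through Part~3 is equally valid.
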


Clearly, such a simplex can be constructed by induction, and the criterion obviously implies that the set of its extreme points is dense.

Simplices with closed Choquet boundary are called {\it Bauer simplices}. Between Bauer and Poulsen simplices, there are many intermediate types of simplices. In the literature on convex geometry and the theory of invariant measures, this subject has been repeatedly discussed. However, it seems that these and similar properties of infinite-dimensional simplices have never been considered in relation to projective limits and the theory of graded graphs and corresponding algebras. Each of these properties has an interesting interpretaion in the framework of these theories. The author believes that the following class of simplices (or even convex compact spaces) is useful for applications: {\it an almost Bauer simplex is a simplex whose Choquet boundary is open in its closure.}

\subsection{All boundaries in geometric terms}

The following definition is a paraphrase of the definition of Martin boundary in terms of projective limits.

\begin{definition} A point   $\{x_n\}\in \Sigma_n$ of a projective limit of simplices belongs to the Martin boundary if there is a sequence of vertices $\alpha_n \in \ex(\Sigma_n)$, $n =1,2, \dots$, such that for every $m$ and an arbitrary
neighborhood $V_{\epsilon}(x_m)\subset \Sigma_m$ there exists $N$
such that  $$\pi_{n,m}(\alpha_n) \in V_{\epsilon}(x_m)$$
 for all $n>N$. Less formally, a point of the limiting simplex belongs to the Martin boundary if there exists a sequence of vertices that weakly converges to this point (``from the outside'').
 \end{definition}

This sequence itself does not in general correspond to a point of the projective limit $\Sigma_{\infty}$,
but it is a point of the space $\mathcal M$ (the direct product of the simplices $\Sigma_n$), and it makes sense to say that its components approach the components of a point of the projective limit, which belongs to the Martin boundary by definition. The condition of belonging to the Martin boundary is a weakening of the almost extremality criterion, hence the following assertion is obvious.

\begin{statement}
The Martin boundary contains the closure of the Choquet boundary.
\end{statement}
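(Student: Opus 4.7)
The plan is to reduce the inclusion $\overline{\ex(\Sigma_\infty)} \subseteq \tilde M(\Gamma,\Lambda)$ to two simpler facts: (i) every extreme point of $\Sigma_\infty$ belongs to the Martin boundary, and (ii) the Martin boundary is a closed subset of $\Sigma_\infty$. Given these, the Martin boundary is a closed set containing $\ex(\Sigma_\infty)$, and therefore contains its closure, which is exactly the conclusion.

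For (i), take an extreme point $\{x_n\}$. By part~1 of Proposition~2, for each fixed $m$ the discrete measures $\nu_n^m = \pi_{n,m}(\mu_n)$, which are supported on $\pi_{n,m}$-images of the vertex set of $\Sigma_n$, converge weakly to $\delta_{x_m}$ as $n\to\infty$. I would fix increasing levels $M_k\uparrow\infty$ and tolerances $\epsilon_k\downarrow 0$, and for each $k$ pick $N_k$ so large that $\nu_n^m(V_{\epsilon_k}(x_m)) > 1 - 1/(2M_k)$ for every $m\le M_k$ and $n\ge N_k$. A union bound on the complements then guarantees that $\mu_n$ assigns mass at least $1/2$ to the set of vertices $v\in\ex(\Sigma_n)$ with $\pi_{n,m}(v)\in V_{\epsilon_k}(x_m)$ \emph{simultaneously} for every $m\le M_k$, so such a vertex exists. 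Selecting one such vertex and calling it $\alpha_n$ on each block $n\in[N_k,N_{k+1})$ assembles a single sequence $\alpha_n\in\ex(\Sigma_n)$ with $\pi_{n,m}(\alpha_n)\to x_m$ for every $m$, which is exactly the Martin-boundary condition.

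For (ii), suppose $\{x^{(k)}\}\to\{x\}$ in $\Sigma_\infty$ with each $\{x^{(k)}\}$ in the Martin boundary, witnessed by vertex sequences $\alpha_n^{(k)}\in\ex(\Sigma_n)$ satisfying $\pi_{n,m}(\alpha_n^{(k)})\to x_m^{(k)}$ as $n\to\infty$. Convergence in the product topology gives $x_m^{(k)}\to x_m$ in $\Sigma_m$ for every fixed $m$. A double diagonalization now does the job: choose $K(k)$ so that $x_m^{(K(k))}$ is within $1/(2k)$ of $x_m$ for all $m\le k$, then choose $N_k$ so that $\pi_{n,m}(\alpha_n^{(K(k))})$ is within $1/(2k)$ of $x_m^{(K(k))}$ for $n\ge N_k$ and $m\le k$, and finally set $\beta_n = \alpha_n^{(K(k))}$ for $n\in[N_k,N_{k+1})$. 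Then $\beta_n\in\ex(\Sigma_n)$, and for any fixed $m$, once $n$ lies in a block with $k\ge m$ one has $\pi_{n,m}(\beta_n)$ within $1/k$ of $x_m$; hence $\pi_{n,m}(\beta_n)\to x_m$, placing $\{x\}$ in the Martin boundary.

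The only step with genuine content is the union bound in (i): Proposition~2.1 delivers the convergence $\nu_n^m\to\delta_{x_m}$ one level at a time, while the Martin-boundary definition requires a single vertex sequence that is simultaneously good on all levels, and the union bound bridges this gap. Step (ii) is pure diagonalization. This is presumably the sense in which the remark after the definition describes the Martin-boundary condition as ``a weakening of the almost extremality criterion.''
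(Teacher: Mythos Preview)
Your argument is correct. The paper itself offers no proof beyond the one-line remark preceding the statement, namely that ``the condition of belonging to the Martin boundary is a weakening of the almost extremality criterion, hence the following assertion is obvious.'' Your decomposition into (i) $\ex(\Sigma_\infty)\subseteq$ Martin boundary and (ii) the Martin boundary is closed is exactly what is needed to make that remark precise, and you have correctly identified that the only point with content is the passage in (i) from the levelwise convergence $\nu_n^m\to\delta_{x_m}$ of Proposition~2(1) to a \emph{single} vertex sequence $\alpha_n$ that works for all $m$ simultaneously; the union-bound argument handles this cleanly. Step (ii) is routine diagonalization, as you say. In short, the paper asserts the implication as evident, and your proof supplies the missing details along the natural route.
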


However, there are examples where the Martin boundary contains the closure of the Choquet boundary as a proper subset.
Such an example, related to random walks, will be described in a joint paper by the author and A.~V.~Malyutin, which is now in preparation. A question arises: can one describe the Martin boundary in terms of the limiting simplex itself? In other words,
can one say what other points (except those lying in the closure of the Choquet boundary) belong to the Martin boundary? The author tends to believe that this cannot be done, since the answer to the latter question depends not only on the geometry of the limiting simplex itself, but also on how it is represented as a projective limit.

\subsection{The probabilistic interpretation of properties of projective limits}

Parallelism between considering pairs \{a graded graph, a system of cotransition probabilities\} on the one hand and considering projective limits of simplices on the other hand means that the latter subject has a probabilistic interpretation. It is useful to describe it without appealing to the language of pairs. Recall that in the context of projective limits a path is a sequence
$\{ t_n\}_n$ of vertices $t_n \in \ex\Sigma_n$ that agrees with the projections $\pi_{n,n-1}$ for all $n\in \mathbb N$ in the following sense: $\pi_{n,n-1}t_n$ has a nonzero barycentric coordinate with respect to $t_{n-1}$. First of all, every point $x_{\infty} \in \Sigma_{\infty}$ of the limiting simplex is a sequence  $\{x_n\}$ of points of the simplices $\Sigma_n$ that agrees with the projections: $\pi_{n,n-1}x_n=x_{n-1}$, $n \in \mathbb N$. As an element of the simplex, $x_n$ determines a measure on its vertices, and, since all these measures agree with the projections, $x_{\infty}$ determines a measure $\mu_x$
 on the path space with fixed cotransition probabilities. Conversely, every such measure comes from a point $x_\infty$. Thus the limiting simplex is the simplex of all measures on the path space with given cotransition probabilities. The extremality of a point $\mu\in \ex(\Sigma_{\infty})$ means the ergodicity of the measure $\mu$, i.e., the triviality with respect to $\mu$ of the tail $\sigma$-algebra on the path space. The above extremality criterion  has a simple geometric interpretation, on which we do not dwell.

So, we have considered the following boundaries of a projective limit of simplices (or an equipped graph):

  {\it  the Poisson--Furtsenberg boundary}  $\subset$ {\it the Dynkin boundary} = {\it the Choquet boundary} $\subset$ {\it the closure of the Choquet boundary} $\subset$
    {\it the Martin boundary}  $\subset$ {\it the limiting simplex}.

The first boundary is understood as a measure space; all inclusions are in general strict; the answer to the question of whether the Martin boundary is a geometric object (i.e., whether it can be defined in purely geometric terms, rather than via approximation) is most probably negative.

We summarize this section with the following conclusion:
{\it the theory of equipped graded graphs  {\rm(}i.e., pairs $\{$a graded graph $+$ a system of cotransition propabilities$\}${\rm)}
is identical to the theory of Choquet simplices regarded as projective limits of finite-dimensional simplices.}

\section{The intrinsic topology on a projective limit of simplices}

\subsection{The definition of the intrinsic topology on an inductive limit}

We proceed to our main goal, which is to construct an approximation of a projective limit of simplices, i.e., a simplex of measures with a given cocycle, and to define the ``intrinsic metric (topology)'' on this limit. This metric was defined in  \cite{V} on path spaces of graphs, only for central measures and under some additional conditions on the graph (the absence of vertices with the same predecessors). Here we give this definition in its natural generality, for an arbitrary graded graph and an arbitrary system of cotransition probabilities  (see Sec.~2), and, most importantly, we consider the whole limiting simplex and not only its Choquet boundary. This allows us to study the boundary for graphs with nonstandard (noncompact) intrinsic metrics. We formulate definitions and results both in terms of equipped graded graphs and in terms of projective limits of simplices spanned by the vertices of different levels.

We start with the definition of an important topological operation which will be repeatedly used, that of ``transferring a metric.''

Let $(X, \rho_X)$ be a metric space and $\phi: X \to Y$ be a (Borel-)measurable map from $X$ to a Borel space $Y$;
assume that the preimages of points $\phi^{-1}(y)$, $y \in \phi(X)\subset Y$, are endowed with Borel probability measures
 $\nu_y$ that depend on $y$ in a Borel-measurable way;  $\phi$ will be called an {\it equipped map}.

\begin{definition}
The result of transferring the metric $\rho_X$ on the space $X$ to the Borel space $Y$ along the equipped map $$\phi:X\rightarrow Y$$ is the metric $\rho_Y$ on  $Y$ defined by the formula $$\rho_Y (y_1,y_2)=k_{\rho_X}(\nu_{y_1},\nu_{y_2}),$$
where $k_{\rho}$ is the classical Kantorovich metric on Borel probability measures on  $(X,\rho_X)$.
\end{definition}

1. Consider an equipped graph $(\Gamma, \Lambda)$ and the corresponding projective limit of simplices
 $\Sigma_{\infty}(\Gamma)$. Define an arbitrary metric $\rho=\rho_1$  on the path space $T(\Gamma)$ that agrees with the Cantor topology on
$T(\Gamma)$; denote by $k_{\rho_1}$ the Kantorovich metric on the space $\Delta(\Gamma)$ of all Borel probability measures on  $T(\Gamma)$ constructed from the metric  $\rho_1$ (see the definition below).

2. Given an arbitrary path $v \equiv\{v_n\}$, consider the finite set of paths $v(u)=\{u,v_2, \dots\}$
whose coordinates coincide with the corresponding coordinates of $v$ starting from the second one, and assign each of these paths the measure $\lambda_{v_2}^u$. We have defined an equipped map
$\phi_1:T(\Gamma) \rightarrow \Delta(\Gamma)=\Delta_1$, which sends a path to the measure $\sum\limits_ {u:u\prec v_2}\lambda_{v_2}^u \delta_v(u)$. It is more convenient to regard it as a map from the simplex
 $\Delta(\Gamma)$ to itself, by identifying a path with the $\delta$-measure at it.

Transferring the metric $\rho_1$ along the equipped map $\phi_1$, we obtain a metric
 $\rho_2$ on a subset $\Delta_2=\phi(\Delta_1)$ of the simplex $\Delta (\equiv \Delta_1(\Gamma))$.

3. In a similar way we define the map $\phi_2$ that sends every measure from $\Delta_2$ concentrated on paths of the form
$\{u_1,v_2, \dots\}$, ${u_1\!\prec\!v_2}$, to the measure on the finite collection of paths of the form $\{u_1,u_2,v_3, \dots\}$
whose coordinates coincide with $v_i$  starting from the third one and the second coordinate $u_2$ runs over all vertices $u_2 \prec v_3$ with probabilities $\lambda_{v_3}^{u_2}$. Again transferring the metric
 $\rho_2$ from  the space $\Delta_2$ along the equipped map $\phi_2$, we obtain a metric $\rho_3$ on the image $\Delta_3 \equiv \phi_2(\Delta_2)=\phi_2\phi_1(\Delta)$.

Note that the images of the maps $\phi_n$, i.e., the sets $\Delta_n$, are simplices, but their vertices are no longer $\delta$-measures on the path space, but measures with finite supports of the form $\sum\limits_ {u_1,u_2, \dots, u_k} \lambda_{u_2}^{u_1}\cdots \lambda_{v_{k+1}}^{u_k}\cdot \delta_{u_1,\dots, u_k,v_{k+1}, \dots}$. The definition of the simplices $\Delta_n$ does not depend on the metrics $\rho_n$.

 4. Continuing this process indefinitely, we obtain an infinite sequence of metrics on the decreasing sequence of simplices
 \begin{align*}
 \Delta_n&=\phi_{n-1}(\Delta_{n-1})=\phi_n\phi_{n-1}\dots \phi_1(\Delta_1),
 \\
  \Delta&=\Delta_1 \supset \Delta_2 \supset \Delta_3 \dots, \quad  \bigcap_n \Delta_n =\Delta_{\infty}.
  \end{align*}
  Thus we have a sequence of equipped maps of the decreasing sequence of simplices
   $$\Delta_1 \to \Delta_2\to \dots\to \Delta_n \to \dots \to\Delta_{\infty}.$$

First we mention an assertion that does not involve the metric.

\begin{statement}
The intersection $\Delta_{\infty}$ of all simplices $\Delta_n$ consists exactly of those measures on the path space
 $T(\Gamma)$ (i.e., those points of the simplex  $\Delta(\Gamma)$ of all measures) that have given cotransition probabilities (given cocycle), and, therefore, this intersection coincides with the projective limit of the simplices:
$$\Delta_{\infty}=\Sigma_{\infty}(\Gamma).$$
\end{statement}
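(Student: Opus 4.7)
My plan is to prove the assertion by establishing, via induction on $n$, an explicit characterization of each $\Delta_n$ as a subset of $\Delta(\Gamma)$, and then passing to the intersection. The inductive claim I would show is that a measure $\mu\in\Delta(\Gamma)$ lies in $\Delta_n$ if and only if the regular conditional distribution of the initial segment $(x_1,\dots,x_{n-1})$ of the path, given the tail $(x_n,x_{n+1},\dots)$, equals the backward Markov product
$$\prod_{i=1}^{n-1}\lambda_{x_{i+1}}^{x_i}$$
supported on chains $x_1\prec x_2\prec\cdots\prec x_{n-1}\prec x_n$. Equivalently, every $\mu\in\Delta_n$ admits a representation as a barycenter, with respect to its own tail marginal from level $n$, of the explicit vertex measures of $\Delta_n$ listed in the excerpt (indexed by tails $(v_n,v_{n+1},\dots)$).

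The induction itself is short. The base case $n=1$ is vacuous since $\Delta_1=\Delta(\Gamma)$. For the inductive step I would unpack the definition of $\phi_n$: it sends the vertex of $\Delta_n$ indexed by $(v_n,v_{n+1},\dots)$ to the vertex of $\Delta_{n+1}$ indexed by $(v_{n+1},v_{n+2},\dots)$, by replacing $v_n$ with a fresh coordinate $u_n\prec v_{n+1}$ of weight $\lambda_{v_{n+1}}^{u_n}$ and redistributing the first $n-1$ coordinates according to the Markov product on chains ending at $u_n$. Extending by linearity and applying the tower property of conditional expectation shows simultaneously that $\Delta_{n+1}\subseteq\Delta_n$ (because conditioning the $(n+1)$-level characterization on $x_n$ yields the $n$-level one) and that any measure satisfying the $(n+1)$-level characterization is the $\phi_n$-image of its own $\Delta_n$-barycentric decomposition.

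Granted the characterization, passing to the intersection is immediate. If $\mu\in\Delta_\infty$ then for every $n$ the conditional distribution of $(x_1,\dots,x_{n-1})$ given the tail from level $n$ has the prescribed product form; marginalizing to the last coordinate gives
$$\Prob_\mu\{x_{n-1}=u\mid x_n,x_{n+1},\dots\}=\lambda_{x_n}^u,$$
so the cotransition of $\mu$ at every level agrees with $\Lambda$. Conversely, once one reads ``agrees with $\Lambda$'' in the strong tail-measurable sense (namely, the Radon--Nikodym cocycle of $\mu$ on the tail equivalence relation matches the cocycle determined by $\Lambda$, as explained in \S2), iterating the one-step cotransition rule along a chain reconstructs the Markov product on any finite initial segment, so $\mu\in\Delta_n$ for every $n$. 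Combined with the identification in \S3 of $\Sigma_\infty(\Gamma)$ with the set of measures on $T(\Gamma)$ having cotransitions $\Lambda$, this gives $\Delta_\infty=\Sigma_\infty(\Gamma)$.

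The main obstacle, as just hinted, is the converse direction: one must invoke explicitly the interpretation of ``having cotransitions $\Lambda$'' as a cocycle condition on the tail equivalence relation, since only in that form does the one-step rule propagate to arbitrary initial segments. After that the remaining verifications are routine: the vertex measures $\sum\prod\lambda\cdot\delta$ are probability measures because $\sum_{u\prec v}\lambda_v^u=1$ level by level, and regular conditional distributions exist because $T(\Gamma)$ is a Cantor (hence Polish) space.
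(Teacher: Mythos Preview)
The paper does not supply a proof of this proposition at all; it is stated as a bare assertion (``First we mention an assertion that does not involve the metric'') and the text moves on immediately to Theorem~1. Your argument therefore cannot be compared to the paper's own proof, but it does provide a correct justification for a claim the paper leaves to the reader.

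Your inductive characterization of $\Delta_n$ is the natural one and matches the paper's description of the vertex measures $\sum_{u_1,\dots,u_k}\lambda_{u_2}^{u_1}\cdots\lambda_{v_{k+1}}^{u_k}\delta_{u_1,\dots,u_k,v_{k+1},\dots}$: once one sees that $\Delta_n$ is the simplex with these vertices, membership in $\Delta_n$ is exactly the condition that the conditional law of the first $n-1$ coordinates given the tail from level~$n$ is the backward Markov product, and the induction step is just the identity $\phi_n\big(\mu^{(n)}_{(v_n,v_{n+1},\dots)}\big)=\sum_{u_n\prec v_{n+1}}\lambda_{v_{n+1}}^{u_n}\mu^{(n)}_{(u_n,v_{n+1},\dots)}=\mu^{(n+1)}_{(v_{n+1},\dots)}$. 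You are also right to flag, in the converse direction, that ``$\mu$ agrees with $\Lambda$'' must be read in the cocycle sense of \S2.2 (equivalently, $\Prob_\mu\{x_{n-1}=u\mid x_n,x_{n+1},\dots\}=\lambda_{x_n}^u$ and not merely $\Prob_\mu\{x_{n-1}=u\mid x_n\}=\lambda_{x_n}^u$); the paper uses both phrasings interchangeably, and only under the stronger reading does the equality $\Delta_\infty=\Sigma_\infty(\Gamma)$ hold. This is a genuine point the paper glosses over, and your proof handles it correctly.
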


Of more importance is the following fact.

\begin{theorem}
There exists a limit $\lim_{n \to \infty}\rho_n = \rho_{\infty}$ of metrics on the space  $\Delta_{\infty}(=\Sigma_{\infty}(\Gamma))$. The limiting simplex $\Sigma_{\infty}(\Gamma)$ equipped with this metric is not in general compact, so that $\rho_{\infty}$ does not generate the projective limit topology.
\end{theorem}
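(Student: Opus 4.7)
The plan is to prove existence of $\rho_\infty$ by showing that on $\Delta_\infty = \Sigma_\infty(\Gamma)$ the sequence $\rho_n$ is monotone and bounded, and then to display a concrete graph for which the limiting metric topology is strictly finer than the projective-limit topology. The starting observation is that any $\mu \in \Delta_\infty$ is invariant under every averaging map $\phi_k$, and the equipped measure $\nu_\mu$ on the fiber $\phi_{n-1}^{-1}(\mu) \subset \Delta_{n-1}$ has barycenter $\mu$ --- this is precisely the defining property of having the prescribed cotransition probabilities. Using the dual Kantorovich--Rubinstein characterization of $k_\rho$ as a supremum over $1$-Lipschitz test functions, one compares $\rho_n(\mu_1,\mu_2) = k_{\rho_{n-1}}(\nu_{\mu_1},\nu_{\mu_2})$ with $\rho_{n-1}(\mu_1,\mu_2)$ directly; I expect the sequence to be monotone increasing in $n$, since each transfer uses a metric that already encodes the previous step's averaging, so the new Kantorovich distance dominates the old one. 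Once monotonicity on $\Delta_\infty$ is established, boundedness is automatic if $\rho_1$ is chosen of diameter at most $1$, and $\rho_\infty = \lim_n \rho_n$ exists pointwise on $\Sigma_\infty \times \Sigma_\infty$. Symmetry and the triangle inequality pass to the limit termwise, and nondegeneracy follows from $\rho_\infty \ge \rho_1 > 0$ on distinct points.

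For the non-compactness claim, I would exhibit a graded graph whose tail filtration is nonstandard in the sense of \cite{V,Dec}. On such a graph one can find a sequence of extreme measures $\mu_k \in \ex(\Sigma_\infty)$ that converges in the projective-limit (weak) topology to a non-extreme $\mu$. By the extremality criterion stated above, the fiber measures in the iterative construction are $\delta$-measures for every $\mu_k$ while the corresponding measures for $\mu$ are proper convex combinations, so the Kantorovich distances used to define $\rho_n(\mu_k,\mu)$ remain bounded below by a positive constant uniformly in $n$. Hence $\mu_k \not\to \mu$ in $\rho_\infty$, and the identity map $(\Sigma_\infty,\mathrm{weak}) \to (\Sigma_\infty,\rho_\infty)$ fails to be a homeomorphism; since the source is compact by the first proposition of \S3.1, the target cannot be, which also shows that $\rho_\infty$ does not metrize the projective-limit topology.

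The hard part is the monotonicity step: it rests on the observation that the iterated transfer encodes a reverse-martingale-like process with respect to the tail filtration $\Xi(\Gamma)$, so that each $\rho_n$ admits a dual description in terms of functions measurable with respect to $\mathfrak{A}_{n-1}$. Packaging this properly requires a careful translation from the ``equipped map'' language into conditional-expectation language, after which the Kantorovich-duality computation and the monotone comparison become transparent. Existence and nondegeneracy of $\rho_\infty$ then follow routinely from monotone convergence, and the noncompactness example is supplied by the catalogue of nonstandard graphs already developed in \cite{V}.
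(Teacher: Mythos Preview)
Your strategy --- prove that $\rho_n$ is monotone and bounded on $\Delta_\infty$, then take the pointwise limit --- is a legitimate route, but it differs from the paper's and leaves the decisive step as a heuristic. The paper does not argue monotonicity via duality; instead it \emph{unrolls} the recursion. The key observation is that the iterated transfer can be rewritten non-recursively: for $\mu_1,\mu_2\in\Delta_\infty$ one has
\[
\rho_n(\mu_1,\mu_2)=\inf_{\psi\in\Psi_n}\int_{T(\Gamma)\times T(\Gamma)}\rho_1(x_1,x_2)\,d\psi(x_1,x_2),
\]
where $\Psi_n$ is the set of couplings of $\mu_1,\mu_2$ on the \emph{original} path space whose conditional structure respects the cotransition system through level~$n$ (formally, every $\psi\in\Psi_n$ is a mixture of admissible couplings in $\Psi_{n-1}$). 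This gives $\rho_\infty$ directly as the same infimum over $\Psi_\infty=\bigcap_n\Psi_n$, and simultaneously shows that the induced topology is independent of the initial choice of $\rho_1$. Monotonicity $\rho_n\le\rho_{n+1}$ drops out as a corollary of $\Psi_n\supset\Psi_{n+1}$ rather than serving as the engine of the proof.

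Your Kantorovich--Rubinstein/reverse-martingale sketch for monotonicity is plausible but, as you yourself note, the content lives in the phrase ``packaging this properly requires a careful translation,'' and nothing in the proposal actually carries it out. If you do pursue it, you will find that the $\rho_{n-1}$-Lipschitz test functions on $\Delta_{n-1}$ are precisely the $\rho_1$-Lipschitz functions conditionally averaged over the first $n-1$ coordinates --- which is the dual picture of restricting couplings to $\Psi_n$. In other words, the duality route reconstructs the paper's constrained-coupling description, only less directly. The paper's formulation is both shorter and more informative, since it produces an explicit formula for $\rho_\infty$ rather than a bare existence statement. Your handling of the non-compactness clause (invoke a nonstandard graph from \cite{V} and observe that weak convergence of extreme points to a non-extreme point cannot be $\rho_\infty$-convergence) is in line with the paper, which likewise defers to the graph of unordered pairs for the example.
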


\begin{proof}
We will give an explicit description of the limiting ``intrinsic'' metric, using more detailed information on the metrics
 $\rho_n$. To this end, we should remind the definition of the Kantorovich metric on measures and the notion of coupling, which is actually used in the definition of transferring metrics.

\begin{definition}
A coupling of two Borel probability measures $\mu_1,\mu_2$ defined on two (in general, different) Borel spaces
$X_1,X_2$ is an arbitrary Borel measure $\psi$ on the product  $X_1\times X_2$ whose projections to the factors $X_1,X_2$ coincide with $\mu_1,\mu_2$. The set of all couplings for $\mu_1,\mu_2$ will be denoted by
$\Psi(\mu_1,\mu_2)$.
(Other names for this notion are ``bistochastic measure,'' ``polymorphism,'' ``Young measure,'' ``correspondence,'' etc.)

The Kantorovich metric on the simplex of measures on a metric space $(X,\mu)$ is defined as follows:
$$k_\rho(\mu_1,\mu_2)=\inf\left\{ \int\limits_ {X\times X} \rho(x_1,x_2)\,d\psi(x_1,x_2): \psi \in \Psi(\mu_1,\mu_2)\right\}.$$
\end{definition}

Above we defined metrics (i.e., distances between measures) by  recursion on $n$, each time applying coupling. But one can do this consistently, combining all conditions on successive couplings together. In the infinite case, this gives at once a formula for the limiting metric.

Assume that the metric space $X$ in the previous definition is endowed with a sequence of equipped maps
$$X=X_1 \to X_2 (\subset X_1) \to X_3 (\subset X_2)\to \dots \to  X_n (\subset X_{n-1})$$
(here $n$ is finite or infinite; in the second case, the last space should be replaced by the intersection $\bigcap_n X_n \equiv
 X_{\infty}$) and we want to define the distance between measures on the last space  ($X_n$ or $X_{\infty}$). This is exactly our situation, where the spaces $X_n=\Delta_n$ are the simplices determined by the maps that replace an initial segment of a path by a measure distributed on initial segments. The formula remains the same as in the classical case, the difference being in what one means by a coupling:
     $$K_n(\mu_1, \mu_2)=\inf \left\{\int\limits_ {X\times X} \rho(x_1, x_2)\,d\psi(x_1,x_2): \psi_n \in \Psi_n (\mbox{or}\
     \in \Psi_{\infty})\right\}.$$
Here the coupling $\psi_n$ runs over the set $\Psi_n$ consisting of measures on the space $X\times X$
that not only have given projections but are such that the projection of $\psi_n$ to each component agrees with the structure of the sequence of projections of the space $X=X_1 \to X_2 \to \dots $ itself. In other words, for every $n$ the coupling
$\psi_n$ is a mixture of the couplings $\psi_{n-1}$: this strict constraint is the difference with the usual procedure. Thus the above formula correctly defines all metrics, including the limiting metric on the simplex
$\Delta_{\infty}=\Sigma_{\infty}(\Gamma)$.

Although the limiting intrinsic metric depends on the initial metric, nevertheless the formula shows also that the topology determined by the limiting metric is the same for all initial metrics that agree with the topology of the simplex.\end{proof}

\subsection{Standardness}

\begin{definition}\label{def6}
An equipped graph $(\Gamma, \Lambda)$, as well as a projective limit of simplices $\lim_n(\Sigma, \pi_{n,m})$,
are called standard if the limiting simplex of measures $\Sigma_{\infty}$ endowed with the intrinsic metric is compact. In this (and only this) case the projective limit topology coincides with the intrinsic topology.

A (nonequipped) graph will be called standard if the limiting simplex of central measures is compact in the intrinsic metric. The standardness or nonstandardness of an equipped graph depends in general on the system $\Lambda$.
\end{definition}

This definition generalizes the definition of a standard graph given in \cite{V}: a standard graph in the sense of \cite{V} is standard in the sense of the above definition. More exactly, if we restrict Definition~\ref{def6} to the spaces of paths of length $n$ regarded as sequences of vertices, then we obtain the definition from \cite{V}. One may say that the new definition is a linearization (extending to linear combinations) of the previous one. This can also be stated as follows: we consider (instead of vertices of a given level) measures on the set of paths leading to these vertices with given cotransition probabilities. Therefore, all metrics and their limit are defined on sets of measures (rather than sets of vertices), which provides  a natural generality for the definition removing the restrictions on the graph previously imposed. From a practical point of view, of course, it is more convenient to check the standardness by considering vertices (diagrams) if this is possible.

An example of a graph with a noncompact intrinsic metric is given in \cite{V}; we only mention that this is, for instance, the graph of unordered pairs related to the notion of tower of measures.

We state without proofs the main facts, which were partially reported in
  \cite{V}  under additional assumptions.

\smallskip
1. For a standard graph (projective limit of simplices), every ergodic measure on paths enjoys a concentration property: for every $\epsilon>0$, for all sufficiently large $n$, the $n$th level vertices lying on a set of paths of measure
$> 1-\epsilon$ are contained in a ball of radius at most $\epsilon>0$ in the intrinsic metric (this is also called the ``limit shape'' property). This allows one, in the case of an arbitrary standard equipped graph, to search for all ergodic measures among the limits along paths in the intrinsic metric (rather than among the weak limits, according to the ergodic method). In the nonstandard situation, the ergodic method cannot be strengthened in this way: the set of weak limits  in this case is in fact greater than the set of limits in the intrinsic metric.

\smallskip
2. The tail filtration on the path space of a standard graph with respect to every ergodic measure is standard in the metric sense. (For the definition of a standard filtration and the standardness criterion in the metric category, see
 \cite{Dec}.)

\smallskip
3. The most important fact, which reproduces the theorem on lacunary isomorphism  \cite{V3} in the topological situation is as follows.

\begin{theorem}[Lacunarization theorem]
For every equipped graph $(\Gamma=\bigcup_n \Gamma_n,\Lambda)$ (respectively, for every projective limit of simplices
 $\lim_n \{\Sigma_n, \{\pi_{n,m}\}_{n,m}\}$), one can choose a subsequence of positive integers $n_k$, $k=1,2,\dots $,
 such that the equipped multigraph $\Gamma' = \bigcup_k \Gamma_{n_k}$ obtained by removing all levels between
 $n_k$ and $n_{k+1}$, $k=1,2, \dots$, and preserving all paths connecting them (respectively, the projective limit $\lim_k\{\Sigma_{n_k}, \{\pi'_{k,s}\}_{k,s}\}$  with the lumped system of projections $\pi'_{k,s}$, where $$\pi'_{k,k+1}=\prod_{i=n_k}^{i=n_{k+1}-1} \pi_{i,i+1})$$ is standard.
\end{theorem}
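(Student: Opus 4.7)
My plan is to choose the lacunae $\{n_k\}$ inductively so that the limiting lumped intrinsic metric $\rho'_{\infty}$ becomes continuous on $\Sigma_{\infty}(\Gamma)$ in the projective limit topology. Since $\Sigma_{\infty}(\Gamma)$ is already compact in the projective limit topology by Proposition~1, and $\rho'_{\infty}$ separates points (it dominates the Kantorovich distance $k_{\rho_1}$ already at stage $k=1$), such continuity will force the projective limit topology and the intrinsic topology to coincide, so $(\Sigma_{\infty},\rho'_{\infty})$ will be compact, i.e., the lumped graph will be standard in the sense of Definition~\ref{def6}.

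The first preparatory step is to note that, for any fixed sequence of lacunae, the lumped intrinsic metrics $\rho'_1\le\rho'_2\le\cdots$ are monotone non-decreasing on $\Sigma_{\infty}\times\Sigma_{\infty}$, since passing from stage $k$ to stage $k+1$ shrinks the admissible coupling class $\Psi'_k\supset\Psi'_{k+1}$ (couplings must additionally mix along the new lumped transfer $\phi'_k$). Moreover each $\rho'_k$ is continuous on $\Sigma_{\infty}\times\Sigma_{\infty}$ in the projective limit topology, because it is built by finitely many Kantorovich transfers starting from the continuous base metric $\rho_1$ on the compact Cantor-like space $T(\Gamma)$, and Kantorovich distances on compact metric spaces are continuous in the weak (hence projective limit) topology. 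By Dini's theorem applied on the compact space $\Sigma_{\infty}\times\Sigma_{\infty}$, the monotone pointwise limit $\rho'_{\infty}$ is continuous if and only if $\rho'_k\to\rho'_{\infty}$ uniformly.

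I would then construct the lacunae recursively to force a summable uniform increment. Set $n_1=1$, and having chosen $n_1<\cdots<n_k$, pick $n_{k+1}$ large enough that
$$
\sup_{\mu_1,\mu_2\in\Sigma_{\infty}}\bigl(\rho'_{k+1}(\mu_1,\mu_2)-\rho'_k(\mu_1,\mu_2)\bigr)<2^{-k}.
$$
The partial sums then dominate $\rho'_{\infty}-\rho'_k$ uniformly by $2^{-(k-1)}$, which, via Dini, yields continuity of $\rho'_{\infty}$ and hence standardness of the lumped pair.

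The genuine difficulty is the feasibility of this recursive step: one must show that a sufficiently large $n_{k+1}$ always exists. The underlying mechanism is that widening the gap $n_{k+1}-n_k$ can only shrink $\rho'_{k+1}$, since the lumped transfer $\phi'_{k+1}$ averages over a richer fiber, enlarging the admissible coupling class via the functoriality inequality relating iterated and direct Kantorovich transfers; this is what allows one to push the increment below any prescribed threshold. The technical input needed is a tightness-style estimate showing that as $n_{k+1}\to\infty$ the contribution to $\rho'_{k+1}-\rho'_k$ coming from the new lumped super-step concentrates to zero uniformly over $\Sigma_{\infty}\times\Sigma_{\infty}$; this is the intrinsic-metric analogue, in the topological category, of Vershik's lacunary isomorphism theorem \cite{V3} referenced in the statement. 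The hardest aspect is the uniformity over the infinite-dimensional set $\Sigma_{\infty}\times\Sigma_{\infty}$; one obtains it by combining the projective-limit compactness of this space with the lower semi-continuity of Kantorovich distances in the weak topology, reducing the uniform bound to a finite covering by projective-limit neighborhoods of extreme points at a sufficiently deep level, where the effectively finite-dimensional structure makes the estimate explicit.
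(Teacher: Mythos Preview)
The paper does not supply a proof of this theorem: immediately before stating it, the author writes ``We state without proofs the main facts, which were partially reported in \cite{V} under additional assumptions,'' and after the statement there is only a one-sentence gloss on its meaning. So there is no argument in the paper to compare your proposal against.

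Evaluated on its own, your framework is sound through the reduction: the monotonicity $\rho'_1\le\rho'_2\le\cdots$ holds because lumping enlarges the admissible coupling class at each stage, each $\rho'_k$ is continuous in the projective-limit topology for the reasons you give, and Dini's theorem does convert uniform convergence of the $\rho'_k$ into continuity of $\rho'_\infty$, which is exactly standardness. The genuine gap is precisely the one you flag yourself: the feasibility of the recursive step. You need that for fixed $n_1<\cdots<n_k$,
\[
\sup_{\mu_1,\mu_2\in\Sigma_\infty}\bigl(\rho'_{k+1}(\mu_1,\mu_2)-\rho'_k(\mu_1,\mu_2)\bigr)\longrightarrow 0
\quad\text{as }n_{k+1}\to\infty,
\]
and nothing in your sketch establishes this. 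Your heuristic that ``widening the gap can only shrink $\rho'_{k+1}$'' gives monotonicity in $n_{k+1}$, but not that the limit equals $\rho'_k$; a priori the extra coupling constraint imposed by any lumped super-step, however deep, could contribute a fixed positive amount. The appeal to ``tightness-style estimates,'' ``functoriality inequalities,'' and ``reducing to a finite covering by neighborhoods of extreme points'' is programmatic rather than an argument: this is exactly the substance of the metric lacunary-isomorphism theorem in \cite{V3,Dec}, transported to the present topological setting, and it is not a consequence of compactness and lower semicontinuity alone. Until that estimate is actually proved, the proposal remains an outline rather than a proof.
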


This means that standardness is a property of the projective limit, and not of the limiting simplex: by changing (lumping) the approximation one can change the intrinsic topology and make it equivalent to the projective limit topology, even if they were distinct before lumping.

The interrelations between standardness and Bauerness of the limiting simplex need further study.


\begin{thebibliography}{98}
\bibitem{V3} A.~M.~Vershik, {\it Theorem on lacunary isomorphisms of monotonic sequences of partitions.} --- Funct. Anal. Appl. {\bf2}, No.~3, 200--203 (1968).
\bibitem{Dec} A.~M.~Vershik, {\it Theory of decreasing sequences of measurable partitions.} ---
 St. Petersburg Math. J. {\bf6}, No.~4, 705--761 (1995).
\bibitem{V} A.~M.~Vershik, {\it The problem of describing central measures on the path spaces of graded graphs.} --- Funct. Anal. Appl. {\bf48}, No.~4, 26--46 (2014).
\bibitem{V1} A.~M.~Vershik,  {\it Intrinsic metric on graded graphs, standardness, and invariant measures.} --- J. Math. Sci.  {\bf200}, No.~6, 677--681 (2014).
\bibitem{VK} A.~M.~Vershik and S.~V.~Kerov, {\it  Locally semisimple algebras. Combinatorial theory and the $K_0$-functor.} ---  J. Soviet Math. {\bf38}, No.~2, 1701--1733 (1987).
%‚ бЎ.: €в®ЈЁ ­ гЄЁ Ё вҐе­. ‘Ґа. ‘®ўаҐ¬. Їа®Ў«. ¬ в. Ќ®ў. ¤®бвЁ¦., в.~26, ‚€Ќ€’€, Њ., 1985, бб.~3--56.
\bibitem{VeK} A.~M.~Vershik and S.~V.~Kerov, {\it Asymptotic of the largest and the typical dimensions of irreducible representations of a symmetric group.} ---
Funct. Anal. Appl. {\bf19}, No.~1, 21--31 (1985).
\bibitem{D} E.~B.~Dynkin, {\it The space of exits of a Markov process.} ---
 Russian Math. Surveys {\bf24}, No.~4,  89--157 (1969).
\bibitem{GK} F.~M.~Goodman and S.~V.~Kerov, {\it The Martin boundary of the Young--Fibonacci lattice.} ---
 J. Algebraic Combin. {\bf11}, No.~1, 17--48 (2000).
\bibitem{KV} V.~Kaimanovich and A.~Vershik, {\it Random walks on discrete groups: Boundary and entropy.}
--- Ann. Prob. {\bf11}, No.~3, 457--490 (1983).
\bibitem{K} S.~V.~Kerov, {\it Asymptotic Representation Theory of the Symmetric Group and its Applications in Analysis}.
   Amer. Math. Soc., Providence, Rhode Island, 2003.
\bibitem{KOO} S.~Kerov, A.~Okounkov, and G.~Olshanski, {\it  The boundary of the Young graph with Jack edge multiplicities}. ---
Int. Math. Res. Not. {\bf 1998}, No. ~4, 173--199 (1998).
\bibitem{V2} A.~Vershik,  {\it Smooth and non-smooth AF-algebras and a problem on invariant measures.}
--- To appear in Proceedings of the St.~Petersburg School in Probability and Statistical Physics (June 2012).

\end{thebibliography}
\end{document}